\newtheorem*{Thm*}{Theorem}
\newtheorem{Thm}{Theorem}
\newtheorem{Prop}[Thm]{Proposition}
\newtheorem{Lemma}[Thm]{Lemma}
\theoremstyle{definition}
\newtheorem{Remark}[Thm]{Remark}
\newtheorem{Example}[Thm]{Example}
\newcommand{\abs}[1]{\left\vert#1\right\vert}
\newcommand{\eps}{\varepsilon}
\newcommand{\set}[1]{\left\{#1\right\}}
\newcommand{\mf}[1]{\mathbb{#1}}
\DeclareMathOperator{\tr}{\mathrm{tr}}
\newcommand{\mc}[1]{\mathcal{#1}}
\newcommand{\mb}[1]{\mathbf{#1}}
\newcommand{\norm}[1]{\left \|{#1} \right \|}
\DeclareMathOperator{\E}{\mathrm{E}}
\newcommand{\Exp}[1]{\E \left[ #1 \right]}
\renewcommand{\Pr}[1]{\mathnormal{P}\left( #1 \right)}
\title{Convergence of permuted products of exponentials}
\author{Michael Anshelevich, Anh Nguyen}
\address{Department of Mathematics, Texas A\&M University, College Station, TX 77843-3368}
\email{manshel@tamu.edu, navgoingcollege2023@tamu.edu}
\subjclass[2010]{Primary 15A16}
\begin{document}

\begin{abstract}
Let $\set{A_{i,n}}$ be a triangular array of elements in a Banach algebra, whose norms do not grow too fast, and whose row averages converge to $A$. Let $\sigma \in S(n)$ be a permutation drawn uniformly at random. If the array only contains $o(n / \log n)$ distinct elements, then almost surely, for all $0 < s < t < 1$, the permuted product of their exponentials $\prod_{i = [s n]}^{[t n]} e^{A_{\sigma(i),n}/n}$ converges in norm to $e^{(t - s) A}$. For an array of finite-dimensional matrices, convergence holds without this restriction. The proof of the latter result consists of an estimate valid in a general Banach algebra, and an application of a matrix concentration inequality.
\end{abstract}

\maketitle

\section{Introduction}

Let $\mc{M}$ be a Banach algebra, $A \in \mc{M}$, and $\set{A_{i,n} : 1 \leq i \leq n, n \in \mf{N}} \subseteq \mc{M}$ a triangular array whose row averages
\[
\frac{1}{n} \sum_{i=1}^n A_{i,n} \rightarrow A.
\]
It is easy to come up with examples, even of two-by-two matrices, for which
\[
\prod_{i=1}^n e^{A_{i,n}/n} \not \rightarrow e^A.
\]
In this note we show that, while possible, such behavior is quite unusual. Under two sets of assumptions, for most rearrangements of the array $\set{A_{i,n}: 1 \leq i \leq n}$, not only do the corresponding products converge to $e^A$, but in fact we have simultaneous approximation of $e^{t A}$ for all $t \in [0,1]$. More precisely, for each $n$, pick a permutation $\sigma \in S(n)$ uniformly at random. Then under the assumptions listed below, permuted products converge uniformly in probability
\begin{equation}
\label{Eq:Probability}
\Pr{\norm{\prod_{i=1}^{[t n]} e^{A_{\sigma(i),n}/n} - e^{t A}} < \eps \text{ for all } 0 \leq t \leq 1} \rightarrow 1 \qquad \forall \eps > 0
\end{equation}
or even uniformly almost surely
\begin{equation}
\label{Eq:AS}
\Pr{\norm{\prod_{i=1}^{[t n]} e^{A_{\sigma(i),n}/n} - e^{t A}} \rightarrow 0 \text{ for all } 0 \leq t \leq 1} = 1.
\end{equation}

The first type of assumption is when the $n$ elements in each row of the array fall into only $o(n/log n)$ distinct types.

\begin{Thm}
\label{Thm:Trotter}
In a Banach algebra $\mc{M}$, let $A \in \mc{M}$, and $\set{A_{i,n} : 1 \leq i \leq n, n \in \mf{N}} \subseteq \mc{M}$ be a uniformly bounded triangular array which satisfies $\frac{1}{n} \sum_{i=1}^n A_{i,n} \rightarrow A$.
Let $c_n = o(n/\log n)$ be a sequence of positive integers. Suppose that for each $n$, there is a family $\set{B_{j,n} : 1 \leq j \leq c_n, n \in \mf{N}}$ and an assignment $j(i)$ such that
\[
\frac{1}{n} \sum_{i=1}^n \norm{A_{i,n} - B_{j(i),n}} = o(1).
\]
Then permuted products converge almost surely in the sense of equation \eqref{Eq:AS}.
\end{Thm}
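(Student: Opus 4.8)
The plan is to cut $[0,1]$ into many short blocks, show that on each block the permuted product is close to $\mb 1$ plus the block average of the $A_{\sigma(i),n}/n$, show that those block averages concentrate near their expectations by a sampling-without-replacement argument (this is where the bound on the number of distinct entries enters), and check that the expectations telescope to $e^{tA}$. \emph{First, reduce to an array with at most $c_n$ distinct entries.} Discard each $B_{j,n}$ of norm exceeding $2M$, where $M:=\sup_{i,n}\norm{A_{i,n}}$ (replace it by $0$: for such $j$ and any $i$ with $j(i)=j$ one has $\norm{A_{i,n}-B_{j,n}}\ge\norm{B_{j,n}}-M>M\ge\norm{A_{i,n}}$, so this only decreases $\frac1n\sum_i\norm{A_{i,n}-B_{j(i),n}}$), so WLOG $\norm{B_{j,n}}\le 2M$. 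Put $\tilde A_{i,n}:=B_{j(i),n}$; then $\set{\tilde A_{i,n}}$ has at most $c_n$ distinct entries and $\frac1n\sum_i\tilde A_{i,n}\to A$. A telescoping estimate using $\norm{e^X-e^Y}\le\norm{X-Y}e^{\max(\norm X,\norm Y)}$ and the uniform bounds $\frac1n\sum_i\norm{A_{i,n}}\le M$, $\frac1n\sum_i\norm{\tilde A_{i,n}}\le 2M$ gives, for every $\sigma$ and every $t$,
\[
\norm{\prod_{i=1}^{[tn]}e^{A_{\sigma(i),n}/n}-\prod_{i=1}^{[tn]}e^{\tilde A_{\sigma(i),n}/n}}\le e^{3M}\,\frac1n\sum_{i=1}^n\norm{A_{i,n}-B_{j(i),n}}=o(1)
\]
uniformly, so it suffices to prove the theorem for $\set{\tilde A_{i,n}}$, whose values $B_{1,n},\dots,B_{c_n,n}$ have multiplicities $m_{j,n}$.

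\emph{Blocks and a Banach-algebra estimate.} Fix $K_n\to\infty$ with $c_nK_n=o(n)$ — possible since $c_n=o(n/\log n)$ (indeed $c_n=o(n)$ would do) — and split $\set{1,\dots,n}$ into consecutive blocks $\mc B_1,\dots,\mc B_{K_n}$ of sizes within $1$ of $n/K_n$. The elementary fact, valid in any Banach algebra, that $\norm{\prod_k e^{Y_k}-\mb 1-\sum_k Y_k}=O\big((\sum_k\norm{Y_k})^2+(\sum_k\norm{Y_k})\max_k\norm{Y_k}\big)$ whenever $\sum_k\norm{Y_k}\le1$, applied to the block product $P_\ell:=\prod_{i\in\mc B_\ell}e^{\tilde A_{\sigma(i),n}/n}$ (here $\sum\norm{\tilde A_{\sigma(i),n}/n}=O(M/K_n)$, each summand $O(M/n)$), gives $\norm{P_\ell-\mb 1-S_\ell}=O(M^2/K_n^2)$ with $S_\ell:=\frac1n\sum_{i\in\mc B_\ell}\tilde A_{\sigma(i),n}$; likewise $\norm{e^{A/K_n}-\mb 1-A/K_n}=O(\norm A^2/K_n^2)$. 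Thus $P_\ell=e^{A/K_n}+D_\ell$ with $\norm{D_\ell}\le\norm{S_\ell-A/K_n}+O(1/K_n^2)$.

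\emph{Concentration of the block averages.} Put $X_n:=\sum_{\ell=1}^{K_n}\norm{S_\ell-A/K_n}$. Since $\set{\sigma(i):i\in\mc B_\ell}$ is a uniform random $|\mc B_\ell|$-subset of $\set{1,\dots,n}$, the count $N_{j,\ell}:=\#\set{i\in\mc B_\ell:\tilde A_{\sigma(i),n}=B_{j,n}}$ is hypergeometric with mean $|\mc B_\ell|m_{j,n}/n$ and variance at most $|\mc B_\ell|m_{j,n}/n$. From $S_\ell=\frac1n\sum_j N_{j,\ell}B_{j,n}$, the bound $\norm{B_{j,n}}\le 2M$, $\Exp{\abs{N_{j,\ell}-\Exp{N_{j,\ell}}}}\le\Var{N_{j,\ell}}^{1/2}$, and Cauchy--Schwarz $\sum_j m_{j,n}^{1/2}\le(c_nn)^{1/2}$, one gets $\Exp{\norm{S_\ell-\Exp{S_\ell}}}=O\big(M(c_n/(nK_n))^{1/2}\big)$; also $\Exp{S_\ell}=\frac{|\mc B_\ell|}{n}\cdot\frac1n\sum_i\tilde A_{i,n}$, so $\norm{\Exp{S_\ell}-A/K_n}$ is controlled by $\norm{\tfrac1n\sum_i\tilde A_{i,n}-A}$ and the $O(1/n)$ block imbalance. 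Summing over $\ell$,
\[
\Exp{X_n}=O\Big(\norm{\tfrac1n\textstyle\sum_{i=1}^n\tilde A_{i,n}-A}+\tfrac{K_n}{n}+M\big(\tfrac{c_nK_n}{n}\big)^{1/2}\Big)\longrightarrow 0 .
\]
For almost sure convergence, note that transposing two entries of $\sigma$ changes at most two of the $S_\ell$, each by at most $4M/n$, so $X_n$ has bounded differences $8M/n$ on $S(n)$; a bounded-differences (McDiarmid-type) inequality for the uniform measure on $S(n)$ then gives $\Pr{\abs{X_n-\Exp{X_n}}\ge\eps}\le 2e^{-c\eps^2 n}$, which is summable in $n$, so with $\Exp{X_n}\to0$ and Borel--Cantelli (along $\eps=1/k$) we get $X_n\to0$ almost surely.

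\emph{Assembly, and the main obstacle.} On the event $\set{X_n\to0}$: for $0\le t\le1$ put $L=\lfloor tK_n\rfloor$, so $\prod_{i=1}^{[tn]}e^{\tilde A_{\sigma(i),n}/n}=\big(\prod_{\ell=1}^L P_\ell\big)Q_t$ with $Q_t$ a product over at most one incomplete block, $\norm{Q_t-\mb 1}\to0$ and $\norm{\prod_{\ell\le L}P_\ell}\le e^{2M}$, uniformly in $t$. Expanding $\prod_{\ell=1}^L(e^{A/K_n}+D_\ell)=e^{LA/K_n}+R$, every term of $R$ carries a factor $D_\ell$ and at most $K_n$ factors $e^{A/K_n}$, so $\norm R\le e^{\norm A}\big(e^{\sum_\ell\norm{D_\ell}}-1\big)\le e^{\norm A}\big(e^{X_n+O(1/K_n)}-1\big)\to0$, and $\norm{e^{LA/K_n}-e^{tA}}\le\norm A e^{\norm A}/K_n\to0$; with the reduction in Step 1 this is \eqref{Eq:AS}. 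The hard part is the concentration step: $K_n$ must be large enough that the accumulated Banach-algebra errors $K_n\cdot O(1/K_n^2)=O(1/K_n)$ vanish, yet small enough that $\Exp{X_n}\to0$, and the only available control of $\Exp{\norm{S_\ell-\Exp{S_\ell}}}$ loses a factor $c_n^{1/2}$ through $\sum_j m_{j,n}^{1/2}\le(c_nn)^{1/2}$ — which is exactly where the hypothesis on the number of distinct entries is used (with room to spare: this route needs only $c_n=o(n)$, the $\log n$ being what a cruder union bound over the $c_n$ types would require). One must also keep every estimate uniform in $t$, so as to obtain simultaneous approximation of $e^{tA}$ rather than convergence at each fixed $t$.
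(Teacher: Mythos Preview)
Your argument is correct and takes a genuinely different route from the paper's. The paper proceeds in two stages: first (Lemma~\ref{Lemma:Trotter-equal}) it handles the special case in which each of the $a_n$ distinct elements occurs with \emph{equal} multiplicity $[n/a_n]$, by invoking the transposition-counting combinatorics of \cite{Ans-Pritchett} (a random word differs from the standard cyclic word $\overline w$ by at most $n^2\tau(w)$ adjacent transpositions, each changing the product by $O(n^{-2})$, and $\tau(w)\lesssim\sqrt{(\log n)/b_n}$ with high probability); then it reduces general multiplicities to the equal-multiplicity case by bucketing each $\beta_j$ to the nearest multiple of $\alpha_n n/c_n$ for a suitable $\alpha_n\to 0$. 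You instead skip the equal-multiplicity lemma entirely: after the same reduction to $c_n$ types, you control $X_n=\sum_\ell\norm{S_\ell-A/K_n}$ directly, bounding $\Exp{X_n}$ via hypergeometric variances and Cauchy--Schwarz over types (this is where the $c_n^{1/2}$ enters), and upgrading to a.s.\ convergence via a bounded-differences inequality on $S(n)$ (Maurey-type concentration for the symmetric group). This buys two things. First, the proof is self-contained, with no appeal to the word/transposition machinery of \cite{Ans-Pritchett}. Second, as you note, your constraints are only $K_n\to\infty$ and $c_nK_n=o(n)$, together with the summable subgaussian tail $e^{-c\eps^2 n}$, so your argument actually delivers the conclusion under the weaker hypothesis $c_n=o(n)$; the paper's $\log n$ is an artifact of choosing $p_n=\sqrt{3\log n}$ in the $\tau(w)$ tail bound (not of a union bound over types, as you speculate), and is genuinely needed for that route.
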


If the Banach algebra $\mc{M}$ has dimension $d < \infty$, the unit ball in $\mc{M}$ has an $\eps$-net with $\sim \eps^{-d}$ elements. It follows that for matrix algebras, the assumptions of Theorem~\ref{Thm:Trotter} hold for any uniformly bounded array, and the corresponding permuted products converge uniformly almost surely. One can allow the bound on the array to grow with $n$, at the cost of lower growth rate for $c_n$. But in the finite-dimensional case, a better growth rate is achieved by a different approach,

\begin{Thm}
\label{Thm:Main}

Let $A$ and $\set{A_{i,n}}$ be as above in the algebra of $d \times d$ complex matrices (with the operator norm). Denote
\[
L^\infty_n = \max_{1 \leq i \leq n} \norm{A_{i,n}}, \quad L^1_n = \frac{1}{n}\sum_{i=1}^n \norm{A_{i,n}}.
\]
If for some $\delta > 0$,
\begin{equation}
\label{Eq:1}
(L^1_n)^{4 + \delta} e^{3 L^1_n} L^\infty_n = o(n),
\end{equation}
then we have uniform convergence \eqref{Eq:Probability} of permuted products in probability. If in addition,
\begin{equation}
\label{Eq:2}
(L^1_n)^3 e^{3 L^1_n} L^\infty_n = o(n/\log n).
\end{equation}
the uniform convergence \eqref{Eq:AS} is almost surely.
\end{Thm}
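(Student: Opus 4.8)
The plan is to separate Theorem~\ref{Thm:Main} into (i) a deterministic inequality, valid in any Banach algebra, that bounds the deviation of a permuted product of exponentials from $e^{tA}$ in terms of the fluctuations of the partial sums $\frac1n\sum_{i=1}^m A_{\sigma(i),n}$, and (ii) a matrix concentration inequality for sampling without replacement, which controls those fluctuations. Write $Y_i = A_{\sigma(i),n}/n$, so that $\sum_{i=1}^n\norm{Y_i} = L^1_n$ and $\max_i\norm{Y_i} = L^\infty_n/n$, and put $\bar A_n = \frac1n\sum_{i=1}^n A_{i,n}$.

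For (i), the estimate I would isolate is: in any unital Banach algebra there is a function $\Psi(L)\le C(1+L)^{c_1}e^{c_2L}$ such that, for \emph{every} $\sigma$,
\[
\sup_{0\le t\le 1}\norm{\prod_{i=1}^{[tn]}e^{Y_i} - e^{tA}} \le \Psi(L^1_n)\Big(\sup_{1\le m\le n}\norm{\frac1n\sum_{i=1}^m A_{\sigma(i),n} - \frac mn\bar A_n} + \norm{\bar A_n - A} + \frac{L^\infty_n}{n}\Big).
\]
To prove it, pass to the interaction picture $R_m = e^{-(m/n)A}\prod_{i=1}^m e^{Y_i}$, so that $R_0 = I$ and $R_m = e^{-A/n}R_{m-1}e^{Y_m}$; expanding $e^{-A/n} = I - \frac1n A + O(n^{-2})$ and $e^{Y_m} = I + Y_m + O(\norm{Y_m}^2e^{\norm{Y_m}})$ yields
\[
R_m - R_{m-1} = \Big(-\tfrac1n A + Y_m\Big) + \Big({-}\tfrac1n A\,(R_{m-1}-I) + (R_{m-1}-I)\,Y_m\Big) + \rho_m,
\]
with $\sum_m\norm{\rho_m}\lesssim L^1_nL^\infty_n/n + n^{-1}$, using $\sum_m\norm{Y_m}^2\le (L^\infty_n/n)L^1_n$. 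Summing and applying discrete Gr\"onwall with multiplier $\exp\big(\sum_m(\frac1n\norm A + \norm{Y_m})\big)\le e^{\norm A + L^1_n}$ bounds $\sup_m\norm{R_m-I}$ by the right-hand side (after splitting $\frac1n\sum_{i\le m}A_{\sigma(i),n} - \frac mn A$ into its centered part plus $\frac mn(\bar A_n - A)$); interpolating over $t\in[m/n,(m+1)/n)$, where the product changes by at most $e^{L^1_n}L^\infty_n/n$ and $e^{tA}$ by $\norm A/n$, replaces $\max_m$ by $\sup_t$.

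For (ii), note that $Z_m := \sum_{i=1}^m(A_{\sigma(i),n}-\bar A_n)$ is a centered sum of $m$ matrices drawn without replacement from $\{A_{1,n},\dots,A_{n,n}\}$, and that $Z_m/(n-m)$ is a martingale. Applying the self-adjoint dilation (which replaces $d$ by $2d$) and a maximal matrix Bernstein/Freedman inequality to $m\le n/2$, and to the time-reversed walk for $m>n/2$, gives --- with variance proxy $\lesssim nL^1_nL^\infty_n$ (since $\frac1n\sum_i\norm{A_{i,n}}^2\le L^1_nL^\infty_n$) and increment bound $\lesssim L^\infty_n$ ---
\[
\Pr{\ \sup_{1\le m\le n}\norm{\tfrac1n Z_m}\ge\eta\ } \le Cd\exp\!\Big({-}\frac{c\,n\,\eta^2}{L^1_nL^\infty_n + \eta L^\infty_n}\Big).
\]
Given $\eps>0$, take $\eta = \eps/\big(2\Psi(L^1_n)\big)$; by the inequality of (i) and the hypothesis $\bar A_n\to A$, for $n$ large the event $\{\sup_t\norm{\prod_{i\le[tn]}e^{Y_i}-e^{tA}}\ge\eps\}$ lies in $\{\sup_m\norm{Z_m/n}\ge\eta\}$, whose probability is $\le Cd\exp\big(-c'n\eps^2/(\Psi(L^1_n)^2 L^1_nL^\infty_n)\big)$. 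The exponents $c_1,c_2$ of $\Psi$ are chosen so that $\Psi(L^1_n)^2 L^1_nL^\infty_n\lesssim (L^1_n)^{4+\delta}e^{3L^1_n}L^\infty_n$; then \eqref{Eq:1} makes this bound tend to $0$, giving \eqref{Eq:Probability}, and \eqref{Eq:2} makes it summable in $n$, so Borel--Cantelli together with monotonicity in $\eps$ gives \eqref{Eq:AS}.

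The crux is the deterministic inequality, and specifically getting its error down to $\sup_m\norm{Z_m/n} + L^1_nL^\infty_n/n$ up to the factor $\Psi(L^1_n)$: the most naive route, comparing $\prod_{i=1}^m e^{Y_i}$ with $\exp(\sum_{i=1}^m Y_i)$, already incurs a commutator error of order $(L^1_n)^2e^{2L^1_n}$, which does not vanish when $L^1_n$ stays bounded, so one must instead track the permuted product against the whole path $t\mapsto e^{tA}$, as above, so that the only surviving first-order error is the partial-sum fluctuation. The remaining subtlety, which dictates the precise shape of \eqref{Eq:1}--\eqref{Eq:2}, is that all partial products must be held in check simultaneously: doing this with a \emph{maximal} matrix inequality rather than a union bound over an $O(n)$-point net in $t$ is what keeps \eqref{Eq:1} at the threshold $o(n)$ with no logarithm, and leaves the $\log n$ in \eqref{Eq:2} as arising solely from the Borel--Cantelli sum over $n$.
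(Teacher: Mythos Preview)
Your proposal is essentially correct, but the route is genuinely different from the paper's, in both the deterministic and the probabilistic halves.

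For the deterministic estimate, the paper does \emph{not} use an interaction picture or Gr\"onwall. Instead it fixes auxiliary scales $a_n,b_n$ with $a_n b_n\approx n$, partitions $\{1,\dots,n\}$ into $b_n$ consecutive blocks $V_{1,n},\dots,V_{b_n,n}$ of length $a_n$, and shows (Proposition~\ref{Prop:uniform}) that if the block averages $\frac{1}{a_n}\sum_{i\in V_{j,n}}A_{\sigma(i),n}$ and the block $\ell^1$-norms are uniformly $\eps e^{-L^1_n}$--close to $A_n$ and $L^1_n$, then $\sup_t\norm{\prod_{i\le [tn]}e^{A_{\sigma(i),n}/n}-e^{tA}}\le 2\eps$. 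For the probabilistic part, the paper applies the \emph{non}-maximal matrix Bernstein inequality (via the Hoeffding/Gross--Nesme reduction from sampling without replacement) to each block separately and takes a union bound over the $b_n$ blocks, yielding $\Pr{\text{some block average bad}}\le 2d\,b_n\exp\!\bigl(-c\,a_n\eps^2 e^{-2L^1_n}/(L^1_nL^\infty_n)\bigr)$; the choice $b_n\gg(L^1_n)^2 e^{L^1_n}$ and $a_n\gg\log(b_n)\,L^1_nL^\infty_n e^{2L^1_n}$ then gives exactly \eqref{Eq:1}, with the $\delta$ in the exponent $4+\delta$ serving precisely to absorb the $\log b_n$ from the union bound (since $\log b_n\lesssim (L^1_n)^{1+\delta}$), while for \eqref{Eq:2} one replaces $\log b_n$ by $\log n$ to get summability. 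So the paper never needs a maximal or martingale inequality: the block structure reduces the union bound from $O(n)$ to $O(b_n)$ points, which is what keeps \eqref{Eq:1} free of $\log n$.

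What your route buys is conceptual cleanliness---no auxiliary scales $a_n,b_n$ to tune---and in fact your natural $\Psi(L)\sim e^{L}$ is sharper than the $e^{3L/2}$ you budget, so your method would give stronger growth conditions than \eqref{Eq:1}--\eqref{Eq:2}. The price is that you must invoke a maximal matrix Freedman inequality for sampling without replacement (applied to the martingale $Z_m/(n-m)$ on $m\le n/2$ and its time-reversal), which is correct but less elementary than the paper's standard Bernstein plus union bound. Your remark that the maximal inequality is ``what keeps \eqref{Eq:1} at the threshold $o(n)$ with no logarithm'' is thus slightly off: the paper achieves the same threshold by coarsening to $b_n$ blocks rather than by strengthening the inequality.
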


We next list several examples where the assumptions (in the matrix case) take a more concrete form.

\begin{Example}

If $L^\infty_n = o(n)$ and $(L^1_n)$ are uniformly bounded, we have convergence of the permuted products in probability. Recall that these are precisely the standard assumptions on a triangular array of scalars with
\[
\sum_{i=1}^n \alpha_{i,n}/n \rightarrow \alpha
\]
to satisfy
\[
\prod_{i=1}^n \left( 1 + \frac{\alpha_{i,n}}{n} \right) \rightarrow e^{\alpha}.
\]
If $L^\infty_n = o(n / \log n)$, the convergence is a.s.
\end{Example}

The following is a more general example.

\begin{Example}
Take $k_n$ matrices of the same norm $L^\infty_n = o(n)$, with the remaining $n - k_n$ matrices uniformly bounded in $n$. Then for
\[
k_n = \frac{n}{3 L^\infty_n} (\log \frac{n}{L^\infty_n} - (4 + 2 \delta) \log \log \frac{n}{L^\infty_n}),
\]
we have $L^1_n \approx \frac{k_n}{n} L^\infty_n \approx \frac{1}{3} \log \frac{n}{L^\infty_n}$ unbounded, but $(L^1)^{4+\delta} e^{3 L_1} L^\infty_n = o(n)$, so the permuted products still converge in probability.

If we take
\[
k_n = \frac{n}{3 L^\infty_n} (\log \frac{n}{L^\infty_n} - \log \log n - (3 + \delta) \log \log \frac{n}{L^\infty_n}),
\]
then $(L^1)^{3} e^{3 L_1} L^\infty_n = o(\frac{n}{\log n})$, and so the permuted products converge almost surely. In particular, we have such convergence for large $L^\infty_n$,
\[
L^\infty_n = \frac{n}{\log n (\log\log n)^{3 + 2 \delta}}, \quad k_n \approx \frac{1}{3} \log n (\log\log n)^{3 + 2 \delta}.
\]
In the opposite direction, we may take all matrices to be uniformly bounded by (unbounded in $n$)
\[
L^\infty_n = \frac{1}{3} (\log n - (5 + \delta) \log \log n).
\]
For a final example of an intermediate regime, for $0 < t \leq 1$ and $0 < \alpha < 1$ or $\alpha = 1$, $\beta \leq 0$,
\[
L^\infty_n = \frac{1}{3 t} n^{1 - \alpha} (\log n)^{1 - \beta}, \quad k_n \approx \alpha t n^\alpha (\log n)^\beta.
\]

\end{Example}

To prove Theorem~\ref{Thm:Trotter}, we first show the result when $o(n/\log n)$ distinct elements occur in the array with the same frequency, and then upgrade it to arbitrary frequencies. The proof of Theorem~\ref{Thm:Main} naturally splits into two parts. In Proposition~\ref{Prop:uniform}, we translate the conditions on multiplicative convergence into conditions on the additive convergence. This part of the result still holds in an arbitrary Banach algebra. Then we prove additive convergence in Lemma~\ref{Lemma:Random} by using a concentration inequality. This part is proven only for finite dimensional matrices. Thus we do not know if the full result holds in a general Banach algebra.

\begin{Remark}[Literature review]

There is a number of articles treating topics related to this work. The following list is not meant to be complete. The techniques are typically more sophisticated than ours.

In \cite{Emme-Hubert}, the authors consider a sequence $(A_n)$ of matrices such that $\frac{1}{n} \sum_{i=1}^n A_i \rightarrow A$ and $L_n^1$ are uniformly bounded. Then for any $t \in \mf{C}$, deterministically
\[
\prod_{i=1}^n (1 + \frac{t}{n} A_i) \rightarrow e^{t A}.
\]
Note that (although the authors do not state this) their results appear to hold in a general Banach algebra.

There has been substantial follow-up work based on this article. In \cite{Henriksen-Ward}, the authors assume the matrices to be random, with mean $A$, uniformly bounded. They obtain a bound on $\norm{\prod_{i=0}^n (1 + \frac{1}{n} A_i) - e^{A}}$ which holds with almost full probability. These results are further improved in \cite{Srivastava-Concentration} and \cite{Tropp-Ward}. This last article implies that, in our context, with probability at least $1 - \delta$, for large $n$,
\[
\norm{\prod_{i=1}^n e^{A_{\sigma(i),n}/n} - e^{A}} \leq \frac{L^\infty_n e^{\norm{A}}}{\sqrt{n}} \ \sqrt{2 e^2 \log(d/\delta)}.
\]
See also \cite{Tropp-Concentration-product}. All in all, the key aim of these papers is to find the fastest rate of convergence, while our (of course, closely related) aim is to find the fastest rate of growth for the matrices under which convergence still holds.
We also emphasize that the convergence occurs uniformly to the whole path $e^{t A}$, $t \in [0,1]$, as well as the interplay between $L^\infty_n$ and $L^1_n$.

Finally, in earlier work \cite{Ans-Pritchett} of the first author with Pritchett, we considered arrays consisting of multiple copies of finitely many elements in a Banach algebra. These results are extended in Theorem~\ref{Thm:Trotter}. For other results in a Banach algebra see \cite{Lenard} and \cite{Popa-Dumitru}.

\end{Remark}

\begin{Remark}
\label{Remark:vN}

Concentration inequalities hold in some, but not all, Banach spaces, see for example \cite{Pinelis-Optimum-bounds} and \cite{Cheng-He-Luo}. So our results can be extended from matrices to these settings.

Concentration inequalities have also been extended to the context of (finite) von Neumann algebras \cite{Junge-Zeng-Poincare}, with probability replaced by the trace of the spectral projection. We however only use probability as a tool to estimate the size of a finite set, so it is not clear if these techniques apply to our context.
\end{Remark}

The paper is organized as follows. The next section treats the results in a Banach algebra. We first estimate the difference of products of exponentials in terms of local averages. Then we prove convergence for arrays of repeated matrices. In the final section, we use matrix Bernstein inequality to prove convergence in the matrix case. We finish with an example on randomized evolution families.

\section{Banach algebra results}

\begin{Remark}[Setup]
\

Choose a sequence of positive integers $a_n \rightarrow \infty$, $a_n = o(n)$. Denote $b_n = [n/a_n]$, so that also $b_n \rightarrow \infty$, $b_n = o(n)$. Divide the set $\set{1, \ldots, a_n b_n}$ into $b_n$ groups $V_{j,n}$ of consecutive elements of size $a_n$, so that for $j = 1, 2, \ldots, b_n$,
\[
V_{j,n} = \set{(j-1) a_n + 1, \ldots, (j-1) a_n + a_n}.
\]
\end{Remark}

\begin{Prop}
\label{Prop:uniform}

Let $A$ and $\set{A_{i,n} : 1 \leq i \leq n, n \in \mf{N}}$ in a Banach algebra satisfy $\frac{1}{n} \sum_{i=1}^n A_{i,n} = A_n$ with $A_n \rightarrow A$.

Suppose that $(L^1_n)^2 e^{L^1_n} = o(b_n)$,
\begin{equation}
\label{Eq:Ineq-1}
\norm{\frac{1}{a_n} \sum_{i \in V_{j,n}} A_{i, n} - A_n} e^{L^1_n} \leq \eps, \quad 1 \leq j \leq b_n,
\end{equation}
and
\begin{equation}
\label{Eq:Ineq-2}
\abs{\frac{1}{a_n} \sum_{i \in V_{j,n}} \norm{A_{i, n}} - L^1_n} e^{L^1_n} \leq \eps, \quad 1 \leq j \leq b_n.
\end{equation}
Then
\[
\limsup_{n \rightarrow \infty} \sup_{t \in [0,1]} \norm{\prod_{i=1}^{[tn]} e^{A_{i,n}/n} - e^{t A}} \leq 2 \eps.
\]
\end{Prop}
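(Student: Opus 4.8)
The plan is a telescoping argument over blocks of length $a_n$. Fix $\eps>0$, take $n$ large, and fix $t\in[0,1]$; put $m=m(t)=\big[\,[tn]/a_n\,\big]\le b_n$, so the first $[tn]$ factors split into $m$ consecutive blocks $V_{1,n},\dots,V_{m,n}$ of size $a_n$ followed by a tail of fewer than $a_n$ factors. Write $P_j=\prod_{i\in V_{j,n}}e^{A_{i,n}/n}$ (in increasing order of $i$), $\bar A_{j,n}=\frac1{a_n}\sum_{i\in V_{j,n}}A_{i,n}$, and $R=\prod_{i=ma_n+1}^{[tn]}e^{A_{i,n}/n}$ for the tail. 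Since the elements $\frac{a_n}{n}A_n$ commute, $\prod_{j=1}^m e^{\frac{a_n}{n}A_n}=e^{\frac{ma_n}{n}A_n}$; and because $\frac{ma_n}{n}\to t$ uniformly in $t$ (as $\abs{\frac{ma_n}{n}-t}\le\frac{a_n+1}{n}$) while $A_n\to A$, also $e^{\frac{ma_n}{n}A_n}\to e^{tA}$ uniformly. So it suffices to bound $\norm{P_1\cdots P_m R-e^{\frac{ma_n}{n}A_n}}$, which I would do block by block.

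On a single block I would use two elementary bounds. First, $\norm{\prod_k e^{X_k}-e^{\sum_k X_k}}\le\sigma^2 e^{\sigma}$ with $\sigma=\sum_k\norm{X_k}$; this follows from $\norm{\prod_k e^{X_k}-I-\sum_k X_k}\le e^{\sigma}-1-\sigma$ (compare the monomials of degree $\ge2$ with those of $\prod_k e^{\norm{X_k}}$) and the analogous inequality for $e^{\sum_k X_k}$. Second, $\norm{e^X-e^Y}\le\norm{X-Y}\,e^{\max(\norm X,\norm Y)}$, obtained by integrating $\frac{d}{ds}(e^{sX}e^{(1-s)Y})$. With $s_j:=\sum_{i\in V_{j,n}}\norm{A_{i,n}}/n$, the first gives $\norm{P_j-e^{\frac{a_n}{n}\bar A_{j,n}}}\le s_j^2 e^{s_j}$, and the second together with \eqref{Eq:Ineq-1} (so $\norm{\bar A_{j,n}-A_n}\le\eps e^{-L^1_n}$) and the boundedness of $A_n$ gives $\norm{e^{\frac{a_n}{n}\bar A_{j,n}}-e^{\frac{a_n}{n}A_n}}\le\frac{a_n}{n}\eps e^{-L^1_n}(1+o(1))$; altogether $D_j:=\norm{P_j-e^{\frac{a_n}{n}A_n}}\le s_j^2 e^{s_j}+\frac{a_n}{n}\eps e^{-L^1_n}(1+o(1))$. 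Here I used that by \eqref{Eq:Ineq-2} the quantity $s_j$ is $\le\frac{a_n}{n}(L^1_n+\eps e^{-L^1_n})$, which $\to0$ uniformly in $j$ because $(L^1_n)^2 e^{L^1_n}=o(b_n)=o(n/a_n)$ already forces $\frac{a_n}{n}L^1_n\to0$.

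Now telescope:
\[
\norm{P_1\cdots P_m-e^{\frac{ma_n}{n}A_n}}\le\sum_{j=1}^m\norm{P_1\cdots P_{j-1}}\,D_j\,\norm{e^{\frac{a_n}{n}A_n}}^{m-j}.
\]
The crucial point is that $\norm{P_1\cdots P_{j-1}}\le\exp\!\big(\sum_{i\le(j-1)a_n}\norm{A_{i,n}}/n\big)$ and $\norm{e^{\frac{a_n}{n}A_n}}^{m-j}\le\exp\!\big((m-j)\tfrac{a_n}{n}\norm{A_n}\big)$ concern \emph{disjoint} ranges --- $m-1<b_n$ blocks in total, each of norm budget $\le\frac{a_n}{n}(L^1_n+\eps e^{-L^1_n})$ by \eqref{Eq:Ineq-2} and $\norm{A_n}\le L^1_n$ --- so, since $ma_n\le n$, their product is $\le e^{L^1_n+\eps e^{-L^1_n}}$, uniformly in $j$ and $t$; this single exponential, not $e^{2L^1_n}$, is what makes the estimate work. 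Hence the sum is $\le e^{L^1_n+\eps e^{-L^1_n}}\big(\sum_j s_j^2 e^{s_j}+\frac{ma_n}{n}\eps e^{-L^1_n}(1+o(1))\big)$; using $\sum_j s_j\le L^1_n$, $\max_j s_j=o(1)$, and $\frac{ma_n}{n}\le1$, this is $\le e^{L^1_n+\eps e^{-L^1_n}}\big(\frac{a_n}{n}(L^1_n)^2 e^{o(1)}+\eps e^{-L^1_n}(1+o(1))\big)=o(1)+e^{\eps e^{-L^1_n}}\eps(1+o(1))$, the first summand vanishing precisely because $(L^1_n)^2 e^{L^1_n}=o(b_n)$. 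Finally $R$ has $<a_n$ factors, all inside the (possibly incomplete) block $V_{m+1,n}$, so by \eqref{Eq:Ineq-2} its norm sum $\sum\norm{A_{i,n}}/n\le\frac{a_n}{n}(L^1_n+\eps e^{-L^1_n})=o(1)$, giving $\norm{R-I}=o(1)$ and $\norm{P_1\cdots P_m R-P_1\cdots P_m}\le e^{L^1_n}o(1)=o(1)$ (here $L^1_n e^{L^1_n}=o(b_n)$ already suffices). Collecting everything, $\sup_{t\in[0,1]}\norm{\prod_{i=1}^{[tn]}e^{A_{i,n}/n}-e^{tA}}\le e^{\eps e^{-L^1_n}}\eps(1+o(1))+o(1)$; since $e^{\eps e^{-L^1_n}}\le e^{\eps}\le2$ for $\eps\le\log2$ (and $\to1$ when $L^1_n\to\infty$), the $\limsup$ is $\le2\eps$.

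The step I expect to be the main obstacle is the accounting in the telescoping: one must see that $\norm{P_1\cdots P_{j-1}}$ and $\norm{e^{\frac{a_n}{n}A_n}}^{m-j}$ range over \emph{complementary} blocks, so their product collapses to $e^{L^1_n+o(1)}$ rather than $e^{2L^1_n}$, and that this single exponential is exactly cancelled by the $e^{-L^1_n}$ built into \eqref{Eq:Ineq-1}--\eqref{Eq:Ineq-2}; simultaneously, the hypothesis $(L^1_n)^2 e^{L^1_n}=o(b_n)$ is calibrated precisely to absorb the $b_n$-fold accumulation of the quadratic within-block (non-commutativity) errors $s_j^2 e^{s_j}\approx(L^1_n/b_n)^2$. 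Everything else is routine, and every estimate above is uniform in $t$ because it depends only on the block count $m\le b_n$ and on the block-uniform bounds, not on $t$ itself.
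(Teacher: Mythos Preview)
Your argument is essentially the paper's: the same block decomposition, the same telescoping, and the same crucial observation that the prefix bound $\norm{P_1\cdots P_{j-1}}$ and the suffix bound $\norm{e^{\frac{a_n}{n}A_n}}^{m-j}$ range over complementary blocks, so their product collapses to $e^{L^1_n+o(1)}$ and is cancelled by the $e^{-L^1_n}$ built into \eqref{Eq:Ineq-1}--\eqref{Eq:Ineq-2}. The within-block estimate differs only cosmetically: the paper linearizes both $P_j$ and $e^{A_n/b_n}$ to first order and compares the linear parts, whereas you route through $e^{\frac{a_n}{n}\bar A_{j,n}}$ via $\norm{\prod e^{X_k}-e^{\sum X_k}}\le\sigma^2 e^\sigma$ and a Lipschitz bound for the exponential.

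There is one small gap in your tail step. When $m=b_n$ (which happens for $t$ close to $1$), the tail indices $b_na_n+1,\dots,[tn]$ lie in none of the $V_{j,n}$, so \eqref{Eq:Ineq-2} does not bound their norm sum directly and your sentence ``all inside the (possibly incomplete) block $V_{m+1,n}$'' fails. You can still recover the bound by complement: the leftover norm sum equals $L^1_n-\sum_{j=1}^{b_n}s_j\le L^1_n\bigl(1-\tfrac{a_nb_n}{n}\bigr)+\eps e^{-L^1_n}\le\tfrac{a_n}{n}L^1_n+\eps e^{-L^1_n}$, using the lower bound in \eqref{Eq:Ineq-2}. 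After multiplication by $\norm{P_1\cdots P_{b_n}}\le e^{L^1_n}$ this contributes $o(1)+\eps$, not the $o(1)$ you claimed, so your final constant becomes $\eps(1+e^{\eps})$ rather than $2\eps$. The paper avoids this extra $\eps$ by comparing the tail product to $e^{kA_n/n}$ instead of to $I$, so that the tail is absorbed into the telescoped target and contributes only $o(1)$.
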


\begin{proof}
For each $j$,
\[
\begin{split}
\norm{\prod_{i \in V_{j,n}} e^{A_{i,n}/n} - e^{A_n/b_n}}
& \leq \norm{\prod_{i \in V_{j,n}} e^{A_{i,n}/n} - (1 + \sum_{i \in V_{j,n}} A_{i,n}/n)} \\
&\quad + \norm{\sum_{i \in V_{j,n}} A_{i,n}/n - A_n/b_n} + \norm{1 + A_n/b_n - e^{A_n/b_n}}.
\end{split}
\]
Using $e^{\abs{x}} - (1 + \abs{x}) \leq \frac{1}{2} \abs{x}^2 e^{\abs{x}}$ and \eqref{Eq:Ineq-2}, the first term is bounded above by
\[
\begin{split}
\prod_{i \in V_{j,n}} e^{\norm{A_{i,n}}/n} - (1 + \sum_{i \in V_{j,n}} \norm{A_{i,n}}/n)
& \leq \frac{1}{2 n^2} e^{\sum_{i \in V_{j,n}} \norm{A_{i,n}}/n} \left(\sum_{i \in V_{j,n}} \norm{A_{i,n}} \right)^2 \\
& \leq \frac{1}{2 n^2} e^{a_n (L^1_n + \eps e^{-L^1_n})/n} \left(a_n (L^1_n + \eps e^{-L^1_n}) \right)^2 \\
& \leq \frac{1}{2 b_n^2} e^{(L^1_n + \eps)/b_n} \left(L^1_n + \eps \right)^2.
\end{split}
\]
Similarly, using \eqref{Eq:Ineq-1}, the second term is bounded by $\eps e^{-L^1_n}/b_n$. Thus
\[
\norm{\prod_{i \in V_{j,n}} e^{A_{i,n}/n} - e^{A_n/b_n}}
\leq \frac{1}{2 b_n^2} e^{(L^1_n + \eps)/b_n} \left(L^1_n + \eps \right)^2 + \frac{1}{b_n} \eps e^{-L^1_n}+ \frac{1}{2 b_n^2} \norm{A_n}^2 e^{\norm{A_n} / b_n}.
\]
Also, for $k < a_n$,
\[
\begin{split}
\norm{\prod_{i=1}^{k} e^{A_{i,n}/n} - e^{k A_n/n}}
& \leq \frac{1}{2 b_n^2} e^{(L^1_n + \eps)/b_n} \left(L^1_n + \eps \right)^2 + \frac{1}{b_n} (L^1_n + \eps + \norm{A_n}) + \frac{1}{2 b_n^2} \norm{A_n}^2 e^{\norm{A_n} / b_n},
\end{split}
\]
where in the second term we use a cruder estimate \eqref{Eq:Ineq-2} instead of \eqref{Eq:Ineq-1}.



Therefore for $k = [tn] - [t n / a_n] a_n$, noting that $\norm{A_n} \leq L^1_n$,
\[
\begin{split}
\norm{\prod_{i=1}^{[tn]} e^{A_{i,n}/n} - e^{t A_n}}
& = \norm{\left(\prod_{j=1}^{[t n /a_n]} \prod_{i \in V_{j,n}} e^{A_{i,n}/n} \right) \prod_{i = [t n /a_n] a_n + 1}^{[tn]} e^{A_{i,n}/n} - \left(e^{A_n/b_n}\right)^{[t b_n]} e^{k A_n/n}} \\
& \leq \sum_{j=1}^{[t n /a_n]} \norm{\prod_{i \in V_{j,n}} e^{A_{i,n}/n} - e^{A_n/b_n}} \prod_{k \neq j} e^{\max(\sum_{i \in V_{k,n}} \norm{A_{i,n}}/n, \norm{A_n}/b_n)} \\
&\quad + \norm{\prod_{i = [t n /a_n] a_n + 1}^{[tn]} e^{A_{i,n}/n} - e^{k A_n/n}} \prod_{k \neq j} e^{\max(\sum_{i \in V_{k,n}} \norm{A_{i,n}}/n, \norm{A_n}/b_n)} \\
& \leq \sum_{j=1}^{b_n} \left( \frac{1}{2 b_n^2} e^{(L^1_n + \eps)/b_n} \left(L^1_n + \eps \right)^2 + \frac{1}{b_n} \eps e^{-L^1_n} + \frac{1}{2 b_n^2} \norm{A_n}^2 e^{\norm{A_n} / b_n} \right) \\
&\quad \times e^{(L^1_n + \eps) (b_n - 1)/ b_n} \\
&\quad + \frac{1}{b_n} \left( e^{(L^1_n + \eps)/b_n} \left(L^1_n + \eps \right)^2 + (L^1_n + \eps + \norm{A_n}) + \norm{A_n}^2 e^{\norm{A_n} / b_n} \right) \\
&\quad \times e^{(L^1_n + \eps) (b_n - 1)/ b_n} \\
& \leq \frac{3}{2 b_n} \left( e^{\eps/b_n} \left(L^1_n + \eps \right)^2 + (L^1_n + \eps + \norm{A_n}) + \norm{A_n}^2 \right) e^{L^1_n + \eps} \\
&\quad + \left( \eps e^{-L^1_n} \right) e^{L^1_n + \eps}.
\end{split}
\]
Since also $A_n \rightarrow A$, we conclude that uniformly in $t$,
\[
\limsup_{n \rightarrow \infty} \norm{\prod_{i=1}^{[tn]} e^{A_{i,n}/n} - e^{t A}}
\leq \eps e^{\eps}. \qedhere
\]

\end{proof}

\begin{Lemma}
\label{Lemma:Trotter-equal}
Let $a_n = o(n/\log n)$ be a sequence of positive integers. In a Banach algebra $\mc{M}$, let $A$ and $\set{B_{i,n} : 1 \leq i \leq a_n, n \in \mf{N}}$ be uniformly bounded and satisfy $\frac{1}{a_n} \sum_{i=1}^{a_n} B_{i,n} \rightarrow A$. Consider the array $\set{A_{i,n} : 1 \leq i \leq n, n \in \mf{N}}$, where each element $B_{i,n}$ appears $[n/a_n]$ times, with the rest of the elements uniformly bounded. For each $n$, pick a permutation $\sigma \in S(n)$ uniformly at random. Then
\[
\Pr{\norm{\prod_{i=1}^{[t n]} e^{A_{\sigma(i),n}/n} - e^{t A}} \rightarrow 0 \text{ for all } 0 \leq t \leq 1} = 1.
\]
\end{Lemma}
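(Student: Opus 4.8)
The plan is to apply Proposition~\ref{Prop:uniform} to the random array $\set{A_{\sigma(i),n}}_{1\le i\le n}$, after a careful choice of the block size. Throughout, write $m_n$ for the integer called $a_n$ in the statement of the lemma (the number of distinct values), let $M$ be a common bound for the norms $\norm{B_{i,n}}$ and for the norms of the remaining ``leftover'' elements, and put $q_n=[n/m_n]$ (the number of copies of each $B_{\cdot,n}$) and $s_n=n-m_n q_n<m_n$ (the number of leftover slots). Permuting leaves the row sum and the $\ell^1$-average of norms unchanged, so $\frac1n\sum_i A_{\sigma(i),n}=A_n\to A$ and $L^1_n\le M$; in particular $(L^1_n)^2e^{L^1_n}\le M^2e^M$ is bounded, so that hypothesis of Proposition~\ref{Prop:uniform} will be automatic for any diverging block count.

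The decisive step will be the choice of block size. Because $m_n=o(n/\log n)$, one can pick $\omega_n\to\infty$ with $\omega_n/\log n\to\infty$ and $m_n\omega_n=o(n)$ --- for instance $\omega_n=\sqrt{n\log n/m_n}$ --- and take the block size (the ``$a_n$'' of the Setup) to be $\ell_n:=\lceil m_n\omega_n\rceil$, with $b_n:=[n/\ell_n]$ blocks $V_{j,n}$ of size $\ell_n$. Then $\ell_n\to\infty$, $\ell_n=o(n)$ (so $b_n\to\infty$), and $\ell_n/m_n\to\infty$ faster than $\log n$. The reason to take $\ell_n\gg m_n$ is that each of the $m_n$ types then occurs $\gg 1$ times in each block, so its count concentrates \emph{relatively} and a crude triangle inequality already suffices --- no Banach-space-valued concentration is needed, which is what lets the argument run in a general Banach algebra. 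Concretely, for a fixed position-block $V_{j,n}$ the image $\sigma(V_{j,n})$ is a uniformly random $\ell_n$-subset of $\set{1,\dots,n}$, so the number $N_{j,n}^{(k)}$ of type-$k$ elements it picks up is hypergeometric with mean $\mu_n:=\ell_n q_n/n\sim\omega_n\to\infty$, and the number $R_{j,n}$ of leftover elements it picks up is hypergeometric with mean $\ell_n s_n/n<\ell_n m_n/n=o(\ell_n)$.

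Now fix $\eps>0$. By the Chernoff bound for the hypergeometric distribution, $\Pr{\abs{N_{j,n}^{(k)}-\mu_n}>\eps\mu_n}\le 2e^{-c\eps^2\mu_n}$, and a union bound of the same flavour gives $\Pr{R_{j,n}>\eps\ell_n}\le e^{-c'\eps\ell_n}$ once $s_n/n$ is small enough. Since $\mu_n\gtrsim\omega_n$ and $\ell_n\gtrsim\omega_n$ with $\omega_n/\log n\to\infty$, summing these over the $\le m_n b_n\le n^2$ pairs $(j,k)$ and the $\le b_n\le n$ blocks shows that the bad event $E_n(\eps)=\set{\exists\, j,k:\abs{N_{j,n}^{(k)}-\mu_n}>\eps\mu_n}\cup\set{\exists\, j:R_{j,n}>\eps\ell_n}$ has probability summable in $n$, so by Borel--Cantelli almost surely only finitely many $E_n(\eps)$ occur. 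On $E_n(\eps)^c$ I would write $\frac1{\ell_n}\sum_{i\in V_{j,n}}A_{\sigma(i),n}=\frac1{\ell_n}\sum_k N_{j,n}^{(k)}B_{k,n}+\frac1{\ell_n}(\text{leftover sum in }j)$ and compare it with $\frac1{\ell_n}\sum_k\mu_n B_{k,n}=\frac{q_n}{n}\sum_k B_{k,n}=\frac{m_n q_n}{n}\bigl(\frac1{m_n}\sum_k B_{k,n}\bigr)$: the first two error terms are at most $\frac{M}{\ell_n}\sum_k\abs{N_{j,n}^{(k)}-\mu_n}\le M\eps$ and $MR_{j,n}/\ell_n\le M\eps$, while $\frac{m_n q_n}{n}\to1$, $\frac1{m_n}\sum_k B_{k,n}\to A$ and $A_n\to A$, so $\sup_j\norm{\frac1{\ell_n}\sum_{i\in V_{j,n}}A_{\sigma(i),n}-A_n}\le 2M\eps+o(1)$, and the identical computation with norms in place of the $B$'s gives $\sup_j\abs{\frac1{\ell_n}\sum_{i\in V_{j,n}}\norm{A_{\sigma(i),n}}-L^1_n}\le 2M\eps+o(1)$. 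Multiplying by $e^{L^1_n}\le e^M$, the hypotheses \eqref{Eq:Ineq-1}--\eqref{Eq:Ineq-2} hold for all large $n$ with $\eps$ replaced by a fixed multiple of $\eps$.

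Hence, almost surely, Proposition~\ref{Prop:uniform} applies to $\set{A_{\sigma(i),n}}$; taking $\eps$ along a sequence $\eps\downarrow0$ and intersecting the corresponding probability-one events, almost surely $\limsup_{n}\sup_{t\in[0,1]}\norm{\prod_{i=1}^{[tn]}e^{A_{\sigma(i),n}/n}-e^{tA}}$ is bounded by an arbitrarily small constant, hence is $0$, which in particular gives the stated pointwise convergence for all $t\in[0,1]$. The only real difficulty is the three-way balancing in the choice of $\ell_n$ --- blocks much larger than $m_n$, with $\ell_n/m_n$ outgrowing $\log n$, yet $\ell_n=o(n)$ --- which is precisely where the hypothesis $m_n=o(n/\log n)$ enters; the $o(1)$ terms coming from the $s_n<m_n$ leftover elements and from $A_n,\frac1{m_n}\sum_k B_{k,n}\to A$ are routine bookkeeping.
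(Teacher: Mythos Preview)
Your argument is correct and takes a genuinely different route from the paper's. The paper does \emph{not} apply Proposition~\ref{Prop:uniform} to the random array; instead it reduces to a single deterministic ``standard word'' $\overline{w}$ (the balanced Trotter arrangement) using the transposition machinery from \cite{Ans-Pritchett}: it measures the imbalance $\tau(w)$ of the random word, bounds the number of adjacent swaps needed to reach $\overline{w}$ by $n^2\tau(w)$, controls the effect of each swap on the product by a commutator estimate $\frac{1}{n^2}\norm{[A_i,A_j]}e^{L^1_n}$, and uses a ballot-type bound $\Pr{\tau(w)>p_n/\sqrt{b_n}}\lesssim a_n^2 e^{-p_n^2}$ together with Borel--Cantelli. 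Proposition~\ref{Prop:uniform} is then invoked only for $\overline{w}$, where the block-average hypotheses hold with $\eps=0$.

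You instead apply Proposition~\ref{Prop:uniform} directly to $\set{A_{\sigma(i),n}}$, choosing the block length $\ell_n$ with $m_n\log n \ll \ell_n \ll n$ so that the expected count of each type per block is $\gg\log n$; scalar hypergeometric Chernoff bounds plus a union bound over at most $n^2$ (block, type) pairs then force every block average to concentrate, and Borel--Cantelli finishes. This is more self-contained (no reliance on the word/transposition results of \cite{Ans-Pritchett}), works uniformly whether $m_n$ is bounded or not, and makes transparent that only scalar concentration is needed---which is exactly why the argument goes through in a general Banach algebra. The paper's approach, in exchange, exposes the combinatorial structure and the link to the earlier work. One cosmetic point: the bound you wrote for the leftover count, $e^{-c'\eps\ell_n}$, is via Hoeffding really $e^{-c'\eps^2\ell_n}$; since $\ell_n/\log n\to\infty$ this is still summable and the argument is unaffected.
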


\begin{proof}
To simplify notation, we will assume that all $\norm{A_{i,n}} \leq 1$. The case of finite $a_n$ was already considered in \cite{Ans-Pritchett}, so we continue to assume that $a_n \rightarrow \infty$ and denote $b_n = [n/a_n]$. Without loss of generality, we may also assume that $A_{i + k a_n, n} = B_{i, n}$ for $0 \leq k < b_n$. For every permutation $\sigma \in S(n)$, let $w$ be the word in the letters $\set{B_{i, n} : 1 \leq i \leq a_n}$ obtained from it by restriction: write the word $A_{\sigma(1), n} \ldots A_{\sigma(n), n}$, and then only keep the letters of the form $B_{j,n}$. Then each word corresponds to exactly $(b_n!)^{a_n} \frac{n!}{(a_n b_n)!}$ permutations, and the uniform distribution on permutations carries over to the uniform distribution on words. Moreover, exactly one of these permutations fixes the final $(n - a_n b_n)$ elements; denote
\[
e_t^w = \prod_{i=1}^{[tn]} e^{A_{\sigma(i),n}/n}.
\]
for that permutation. Finally, denote $\overline{w}$ the standard word, corresponding to the identity permutation.

We recall some results and notation from \cite{Ans-Pritchett}. For a word $w$, denote $w_i[j]$ the number of occurrences of $B_{i,n}$ among the first $j$ letters, and
\[
\tau(w) = \frac{1}{b_n} \left( \max_{\substack{1 \leq k, \ell \leq a_n \\ 1 \leq j \leq n}} \abs{w_k[j] - w_\ell[j]} + 1 \right).
\]
Then $w$ can be transformed into the standard word $\overline{w}$ by no more than $n^2 \tau(w)$ transpositions of neighboring letters. Each transposition changes the norm of the product of exponentials by no more than
\[
\frac{1}{n^2} \max \norm{[A_{i, n}, A_{j,n}]} e^{L^1_n} \leq \frac{1}{n^2} 2 e.
\]
Therefore
\[
\norm{e_t^{\overline{w}} - e_t^w} \leq \tau(w) 2 e.
\]
Also,
\[
\Pr{\tau(w) > \frac{p_n}{\sqrt{b_n}}} \leq 2 a_n^2 \frac{\binom{2 b_n}{b_n - p_n \sqrt{b_n} + 1}}{\binom{2 b_n}{b_n}} \sim 2 a_n^2 e^{-p_n^2}
\]
Taking $p(n) = \sqrt{3 \log n}$, we thus get
\[
\begin{split}
\Pr{\norm{e_t^{\overline{w}} - e_t^w} \geq \sqrt\frac{3 a_n \log n}{n} 2 e}
& \leq 2 \frac{a_n^2}{n^3} = o\left( \frac{1}{n (\log n)^2} \right).
\end{split}
\]
By assumption, $\frac{a_n \log n}{n} = o(1)$, while the probabilities are summable. Thus by the Borel-Cantelli lemma, $\norm{e_t^w - e_t^{\overline{w}}} \rightarrow 0$ a.s.

Next, any two permutations corresponding to a word $w$ can be obtained from each other by at most $n (n - a_n b_n) \leq n a_n$ transpositions of neighboring elements. Thus for such a permutation and the corresponding word,
\[
\norm{\prod_{i=1}^{[t n]} e^{A_{\sigma(i), n}/n} - e_t^w} \leq \frac{a_n}{n} 2 e = o(1).
\]
Finally, for the standard word, the conditions in Proposition~\ref{Prop:uniform} are satisfied with $\eps = 0$. Therefore we have a Lie-Trotter formula, with the estimate
\[
\begin{split}
\norm{e_t^{\overline{w}} - e^{t A_n}}
& \leq \frac{1}{b_n} 6 e = o(1).
\end{split}
\]
Combining the three estimates, we obtain almost sure convergence of $\prod_{i=1}^{[t n]} e^{A_{\sigma(i), n}/n}$ to $e^{t A}$.
\end{proof}

\begin{Remark}
This approach works with $a_n$ distinct types of elements of the array, since a typical number of transpositions between words is $\sim n \sqrt{n a_n}$ while the maximal number is $\sim n^2$. For general permutations, both are of order $n^2$.
\end{Remark}

\begin{proof}[Proof of Theorem~\ref{Thm:Trotter}]
It follows from the assumption that
\[
\norm{\prod_{i=1}^n e^{A_{i,n}/n} - \prod_{i=1}^n e^{B_{j(i), n}/n}} = o(1).
\]
So it suffices to consider the case when $A_{i,n} = B_{j(i), n}$. We will again assume that all $\norm{B_{j,n}}$ are bounded by $1$. Choose $\alpha_n = o(1)$ so that $\alpha_n n / c_n$ is a positive integer while $c_n/\alpha_n$ is still $o(n/ \log n)$. For each $n$, denote
\[
\beta_j = \abs{\set{i : j(i) = j}}, \quad U_n = \set{j : \beta_j \geq \alpha_n n / c_n}, \quad a_n = \sum_{j \in U_n} [\beta_j c_n/\alpha_n n].
\]
Since $\sum_{j \in U_n} \beta_j \leq n$,
\[
a_n = \sum_{j \in U_n} [\beta_j c_n/\alpha_n n] \leq c_n/\alpha_n = o(n / \log n).
\]
On the other hand, since for $j \not \in U_n$, the corresponding $B_{j,n}$ appears less than $\alpha_n n/ c_n$ times,
\[
\sum_{j \in U_n} \beta_j \geq n - c_n \frac{\alpha_n n}{c_n} = (1 - \alpha_n) n.
\]
Therefore also
\[
\begin{split}
a_n \frac{\alpha_n n}{c_n} & \geq \sum_{j \in U_n} \left[ \frac{\beta_j c_n}{\alpha_n n} \right] \frac{\alpha_n n}{c_n}
\geq \sum_{j \in U_n} \beta_j - \sum_{j \in U_n} \frac{\alpha_n n }{c_n} \\
& \geq n - c_n \frac{\alpha_n n}{c_n} - \alpha_n n = n(1 - 2 \alpha_n)
\end{split}
\]
and
\[
(1 - 2 \alpha_n) (c_n / \alpha_n) \leq a_n \leq c_n / \alpha_n.
\]
Apply Lemma~\ref{Lemma:Trotter-equal} with this $a_n$ and the array $\set{B_{j, n, k} : j \in U_n, 1 \leq k \leq [\beta_j c_n/\alpha_n n]}$, where each $B_{j, n, k} = B_{j,n}$, so that the total number of elements in the array is $a_n$. It remains to verify the assumption in the lemma:
\[
\begin{split}
\norm{\frac{1}{c_n / \alpha_n} \sum_{j \in U_n} \left[\frac{\beta_j c_n}{\alpha_n n} \right] B_{j, n} - \frac{1}{n} \sum_{i=1}^{n} A_{i,n}}
& = \norm{\frac{1}{c_n / \alpha_n} \sum_{j \in U_n} \left[\frac{\beta_j c_n}{\alpha_n n} \right] B_{j, n} - \frac{1}{n} \sum_{j =1}^{c_n} \beta_j B_{j, n}} \\
& \leq \sum_{j \in U_n} \abs{\frac{\alpha_n}{c_n} \left[\frac{\beta_j c_n}{\alpha_n n} \right] - \frac{\beta_j}{n}} + \sum_{j \not \in U_n} \frac{\alpha_n}{c_n} \\
& \leq 2 \alpha_n = o(1).
\end{split}
\]
while
\[
\begin{split}
\sum_{j \in U_n} \abs{\frac{1}{c_n / \alpha_n} - \frac{1}{a_n}} \left[\frac{\beta_j c_n}{\alpha_n n} \right]
& \leq \frac{2 \alpha_n}{a_n} \sum_{j \in U_n} \left[\frac{\beta_j c_n}{\alpha_n n} \right] = 2 \alpha_n = o(1). \qedhere
\end{split}
\]
\end{proof}

\section{Matrix algebra results}

To estimate the probability that the assumptions in Proposition~\ref{Prop:uniform} are satisfied, we use the Matrix Bernstein inequality. The following weak form is sufficient for our purposes; see \cite{Tropp-Concentration-inequalities} for a stronger version.

\begin{Prop}[Matrix Bernstein inequality]

Let $X_1, \ldots, X_k$ be independent centered random $d \times d$ matrices, with $\norm{X_i} \leq L$. Let $S_k = X_1 + \ldots + X_k$. Denote $v_k = \sum_{i=1}^k \Exp{\norm{X_i}^2}$. Then
\begin{equation}
\label{Eq:Bernstein}
\Pr{\norm{S_k} > \eps} \leq 2 d \exp \left( - \frac{\eps^2/2}{v_k + L \eps /3} \right).
\end{equation}
\end{Prop}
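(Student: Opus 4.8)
The plan is to run the standard matrix Laplace transform argument (see \cite{Tropp-Concentration-inequalities}) and then deduce the weak form stated here from it. \emph{Reduction to the Hermitian case.} Pass to the Hermitian dilation $\mc{H}(X)$ --- the $2d \times 2d$ Hermitian matrix with $X$ in the upper-right $d \times d$ block, $X^*$ in the lower-left block, and zeros on the block diagonal. Then $\norm{\mc{H}(X)} = \norm{X}$, the spectrum of $\mc{H}(X)$ is symmetric about $0$, so $\norm{S_k} = \lambda_{\max}(\mc{H}(S_k))$, and $\mc{H}(S_k) = \sum_{i=1}^k \mc{H}(X_i)$ is a sum of independent centered Hermitian matrices. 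Moreover $\mc{H}(X_i)^2$ is block-diagonal with blocks $X_i X_i^*$ and $X_i^* X_i$, so $\norm{\sum_{i=1}^k \Exp{\mc{H}(X_i)^2}} \le \sum_{i=1}^k \Exp{\norm{X_i}^2} = v_k$. Hence it suffices to prove, for independent centered Hermitian $m \times m$ matrices $Y_i$ with $\norm{Y_i} \le L$ and $v := \norm{\sum_i \Exp{Y_i^2}}$, that $\Pr{\lambda_{\max}(\sum_i Y_i) > \eps} \le m \exp\!\bigl(-\eps^2 / (2(v + L\eps/3))\bigr)$; applying this with $m = 2d$, $Y_i = \mc{H}(X_i)$, $v \le v_k$, gives \eqref{Eq:Bernstein}.

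\emph{Laplace transform and cumulant subadditivity.} Write $S = \sum_i Y_i$. For $\theta > 0$, Markov's inequality applied to $e^{\theta \lambda_{\max}(\cdot)}$, together with $\lambda_{\max}(e^{\theta T}) \le \tr e^{\theta T}$ for Hermitian $T$, gives $\Pr{\lambda_{\max}(S) > \eps} \le e^{-\theta\eps}\,\Exp{\tr e^{\theta S}}$. The key --- and, I expect, the only genuine obstacle --- is the subadditivity of matrix cumulant generating functions, a consequence of Lieb's concavity theorem, which I would invoke as a black box: $\Exp{\tr e^{\theta S}} \le \tr \exp\!\bigl(\sum_{i=1}^k \log \Exp{e^{\theta Y_i}}\bigr)$. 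For a single summand, since the scalar function $x \mapsto (e^{\theta x} - 1 - \theta x)/x^2$ is increasing, on $\norm{Y_i} \le L$ one obtains the operator inequality $\Exp{e^{\theta Y_i}} \preceq I + g(\theta)\Exp{Y_i^2} \preceq \exp(g(\theta)\Exp{Y_i^2})$, where $g(\theta) = (e^{\theta L} - 1 - \theta L)/L^2 \le (\theta^2/2)/(1 - \theta L/3)$ for $0 < \theta < 3/L$. Operator monotonicity of $\log$ and monotonicity of $T \mapsto \tr e^T$ then give $\Exp{\tr e^{\theta S}} \le m\, e^{g(\theta) v}$.

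\emph{Optimization over $\theta$.} Combining the previous two bounds, $\Pr{\lambda_{\max}(S) > \eps} \le m \exp\!\bigl(-\theta\eps + \theta^2 v / (2(1 - \theta L/3))\bigr)$ for every $0 < \theta < 3/L$. The choice $\theta = \eps/(v + L\eps/3)$, which lies in $(0, 3/L)$, makes the exponent equal to $-\eps^2/(2(v + L\eps/3))$, which is the desired estimate. Alternatively, one may skip the self-contained derivation and simply cite the sharp Matrix Bernstein inequality of \cite{Tropp-Concentration-inequalities} for $\sum_i \mc{H}(X_i)$, using only $\norm{\mc{H}(X_i)} \le L$ and $\norm{\sum_i \Exp{\mc{H}(X_i)^2}} \le v_k$ from the reduction above.
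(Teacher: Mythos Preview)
Your sketch is correct and faithfully reproduces the standard Laplace-transform/Lieb-concavity argument from \cite{Tropp-Concentration-inequalities}; the reduction via Hermitian dilation, the bound $\norm{\sum_i \Exp{\mc{H}(X_i)^2}} \le v_k$, and the optimization in $\theta$ are all in order. Note, however, that the paper does not actually prove this proposition: it is quoted as a known result with a reference to \cite{Tropp-Concentration-inequalities}, exactly the alternative you mention in your last sentence, so there is nothing further to compare.
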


The following result from \cite{Hoeffding,Gross-Nesme} allows us to apply the inequality above to random matrices drawn from a finite set without replacement.

\begin{Prop}

Let $C$ be a finite set of $d \times d$ matrices with sum $0$ and norms bounded by $L$. For $k \leq \abs{C}$, let $\mb{X} = (X_1, \ldots, X_k)$ be independent, drawn from $C$ uniformly with replacement, while $\mb{Y} = (Y_1, \ldots, Y_k)$ are drawn from $C$ uniformly without replacement. Denote $S_{\mb{X}} = X_1 + \ldots + X_k$ and the same for $S_{\mb{Y}}$. Then
\[
\Pr{\norm{S_\mb{Y}} > \eps} \leq \inf_{\lambda > 0} e^{-\lambda \eps} \Exp{\tr \exp (\lambda S_{\tilde{\mb{Y}}})} \leq \inf_{\lambda > 0} e^{-\lambda \eps} \Exp{\tr \exp (\lambda S_{\tilde{\mb{X}}})} \leq 2 d \exp \left( - \frac{\eps^2/2}{v_k + L \eps /3} \right),
\]
where $\tilde{\mb{Y}}$ and $\tilde{\mb{X}}$ are the Hermitian dilations of $\mb{Y}$ and $\mb{X}$.
\end{Prop}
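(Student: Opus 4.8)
The plan is to chain three reductions, the only substantial one being the comparison of sampling without replacement with sampling with replacement; everything else is the standard matrix Laplace-transform (Ahlswede--Winter) machinery. First I would pass to Hermitian dilations. Let $\widetilde{M}$ denote the Hermitian dilation of $M$ (the $2d\times 2d$ Hermitian matrix with $M$ in the upper right block and $M^*$ in the lower left block). The map $M \mapsto \widetilde{M}$ is real-linear, preserves the operator norm, and the eigenvalues of $\widetilde{M}$ are the singular values of $M$ together with their negatives, so $\lambda_{\max}(\widetilde{M}) = \norm{M}$. Hence $\widetilde{C} = \set{\widetilde{c} : c \in C}$ is a finite set of Hermitian matrices with sum $0$ and norms bounded by $L$, a uniform sample from $C$ (with or without replacement) corresponds to the matching sample from $\widetilde{C}$, and by linearity $S_{\widetilde{\mb{Y}}} = \widetilde{S_{\mb{Y}}}$, so $\norm{S_{\mb{Y}}} = \lambda_{\max}(S_{\widetilde{\mb{Y}}})$ (and likewise for $\mb{X}$). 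Thus it suffices to bound the upper tail of the top eigenvalue of a sum of Hermitian matrices, and the dilation is performed once and for all.

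For the first inequality, fix $\lambda > 0$. On the event $\set{\lambda_{\max}(S_{\widetilde{\mb{Y}}}) > \eps}$ one has $e^{\lambda\eps} < e^{\lambda\lambda_{\max}(S_{\widetilde{\mb{Y}}})} = \lambda_{\max}\big(\exp(\lambda S_{\widetilde{\mb{Y}}})\big) \leq \tr\exp(\lambda S_{\widetilde{\mb{Y}}})$, the last step because $\exp(\lambda S_{\widetilde{\mb{Y}}})$ is positive semidefinite. Markov's inequality then gives $\Pr{\norm{S_{\mb{Y}}} > \eps} \leq e^{-\lambda\eps}\Exp{\tr\exp(\lambda S_{\widetilde{\mb{Y}}})}$, and taking the infimum over $\lambda > 0$ yields the first inequality.

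The second inequality is Hoeffding's comparison in matrix form. The function $M \mapsto \tr\exp(\lambda M)$ is convex on the real vector space of Hermitian matrices (it is the trace of the convex scalar function $\exp(\lambda\,\cdot)$ applied via functional calculus). Hoeffding's majorization of a without-replacement sum by a with-replacement sum, extended to such convex functions of Hermitian matrices in \cite{Gross-Nesme}, therefore gives $\Exp{\tr\exp(\lambda S_{\widetilde{\mb{Y}}})} \leq \Exp{\tr\exp(\lambda S_{\widetilde{\mb{X}}})}$ for every $\lambda > 0$, hence after taking the infimum. For the third inequality, $\widetilde{X}_1,\dots,\widetilde{X}_k$ are \emph{independent}, each centered because $\sum_{c\in C}c = 0$, Hermitian with norm at most $L$, and satisfy $\sum_{i=1}^k \Exp{\norm{\widetilde{X}_i}^2} = v_k$; so $\inf_{\lambda>0} e^{-\lambda\eps}\Exp{\tr\exp(\lambda S_{\widetilde{\mb{X}}})}$ is precisely the quantity bounded by the matrix Chernoff/Bernstein computation behind \eqref{Eq:Bernstein} (see \cite{Tropp-Concentration-inequalities}) for a sum of $k$ independent centered Hermitian matrices with these parameters, namely $\tr\exp(\lambda S_{\widetilde{\mb{X}}}) \leq \tr\exp\!\big(\sum_i \log\Exp{e^{\lambda\widetilde{X}_i}}\big)$ followed by the usual cumulant estimate and optimization in $\lambda$, the ambient trace contributing exactly the factor $2d$.

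I expect the first half of the third step to be the only real obstacle: one cannot feed $S_{\mb{Y}}$ into \eqref{Eq:Bernstein} directly because $Y_1,\dots,Y_k$ are dependent, and the device that circumvents this --- that the without-replacement sum is dominated in convex order by the with-replacement sum --- is exactly Hoeffding's lemma, which here must be used in its matrix-valued form. I would cite \cite{Hoeffding,Gross-Nesme} for it rather than reprove it; the dilation bookkeeping, the Laplace-transform step, and the iid matrix Bernstein computation are all routine.
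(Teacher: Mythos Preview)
Your sketch is correct and follows the standard route: pass to Hermitian dilations, apply the matrix Laplace transform/Markov step, invoke the Hoeffding--Gross--Nesme convex-order comparison to replace sampling without replacement by sampling with replacement, and finish with the iid matrix Bernstein computation from \cite{Tropp-Concentration-inequalities}. Each of the three inequalities is handled by exactly the right tool, and the bookkeeping on dilations (linearity, $\lambda_{\max}(\widetilde{M})=\norm{M}$, the $2d$ in the trace bound) is accurate.

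Note, however, that the paper does not actually supply a proof of this proposition: it is stated as a quotation from \cite{Hoeffding,Gross-Nesme} (combined with the preceding matrix Bernstein proposition) and used as a black box. So there is no ``paper's own proof'' to compare against; your write-up is in effect a reconstruction of the argument behind those references, and it matches what one finds there.
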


\begin{Lemma}
\label{Lemma:Random}
Let $\set{A_{i,n} : 1 \leq i \leq n, n \in \mf{N}} \subseteq M_d(\mf{C})$ and $\frac{1}{n} \sum_{i=1}^n A_{i,n} = A_n$. For permutations $\sigma \in S(n)$ drawn uniformly at random, for $\eps < 3 L^1_n$,
\[
\Pr{\exists j : 1 \leq j \leq b_n, \norm{\frac{1}{a_n} \sum_{i \in V_{j,n}} A_{\sigma(i), n} - A_n} > \eps}
\leq b_n 2 d \exp \left( - \frac{a_n \eps^2/12}{L^1_n L^\infty_n} \right).
\]
\end{Lemma}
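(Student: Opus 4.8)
The goal is to bound the probability that, for some block $V_{j,n}$, the block average $\frac1{a_n}\sum_{i\in V_{j,n}} A_{\sigma(i),n}$ deviates from the global average $A_n$ by more than $\eps$. First I would take a union bound over the $b_n$ blocks, so it suffices to estimate $\Pr{\norm{\frac1{a_n}\sum_{i\in V_{j,n}} A_{\sigma(i),n} - A_n} > \eps}$ for a single fixed $j$. Since $\sigma$ is uniform on $S(n)$, the multiset $\set{A_{\sigma(i),n} : i\in V_{j,n}}$ is distributed as a uniform sample of size $a_n$, drawn \emph{without replacement}, from the $n$-element multiset $C' = \set{A_{1,n},\ldots,A_{n,n}}$.

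**Centering and rescaling.** To apply the sampling-without-replacement Proposition, I need the finite set to have sum zero. So I center: set $C = \set{A_{i,n} - A_n : 1\le i\le n}$, which has sum $0$. The norms are bounded by $L := L^\infty_n + \norm{A_n} \le 2L^\infty_n$ (using $\norm{A_n}\le L^1_n \le L^\infty_n$); more carefully, one wants to keep the variance proxy sharp, so I would track $\norm{A_{i,n}-A_n}$ rather than crudely doubling. Writing $S_{\mb Y} = \sum_{i\in V_{j,n}}(A_{\sigma(i),n}-A_n)$, the event in question is $\norm{S_{\mb Y}} > a_n\eps$. The Proposition gives
\[
\Pr{\norm{S_{\mb Y}} > a_n\eps} \le 2d\exp\left(-\frac{(a_n\eps)^2/2}{v_{a_n} + L a_n\eps/3}\right),
\]
where $v_{a_n} = \sum_{i=1}^{a_n}\Exp{\norm{X_i}^2}$ for the \emph{with-replacement} sample $X_i$ from $C$.

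**Estimating the parameters.** For a with-replacement draw, $\Exp{\norm{X_i}^2} = \frac1n\sum_{i=1}^n \norm{A_{i,n}-A_n}^2 \le \frac1n\sum_{i=1}^n \norm{A_{i,n}-A_n}\cdot L$. Here I would use $\norm{A_{i,n}-A_n}\le \norm{A_{i,n}} + \norm{A_n}$ and the bound $L\le 2L^\infty_n$, but one must be a little careful to land exactly on $L^1_n L^\infty_n$ in the exponent: the cleanest route is $\sum_i\norm{A_{i,n}-A_n}\le \sum_i\norm{A_{i,n}} + n\norm{A_n}\le 2nL^1_n$, giving $\Exp{\norm{X_i}^2}\le 2L^1_n\cdot L$, so $v_{a_n}\le 2a_n L^1_n L$. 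Hence the denominator in the exponent is $v_{a_n} + La_n\eps/3 \le La_n(2L^1_n + \eps/3)$, and under the hypothesis $\eps < 3L^1_n$ this is at most $La_n\cdot 3L^1_n = 3L a_n L^1_n$. Plugging in $L\le 2L^\infty_n$ gives denominator $\le 6a_n L^1_n L^\infty_n$, so the exponent is at most $-\frac{a_n^2\eps^2/2}{6a_n L^1_n L^\infty_n} = -\frac{a_n\eps^2}{12 L^1_n L^\infty_n}$, which is exactly what is claimed. Combining with the union bound over $j$ yields the stated inequality.

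**Main obstacle.** The only delicate point is bookkeeping the constants so that the variance proxy and the Bernstein denominator collapse to precisely $12 L^1_n L^\infty_n$ rather than something with an extra factor; this is where the hypothesis $\eps < 3L^1_n$ is used, to absorb the $L\eps/3$ term into the variance term. Everything else is a routine application of the two quoted Propositions together with the observation that a uniform random permutation induces uniform sampling without replacement on each block.
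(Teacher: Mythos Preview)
Your proposal is correct and follows essentially the same approach as the paper: union bound over the $b_n$ blocks, identification of the block values with a size-$a_n$ sample without replacement from the centered set $\{A_{i,n}-A_n\}$, and application of the Hoeffding/Gross--Nesme reduction together with matrix Bernstein. Your variance bookkeeping differs only cosmetically---you factor out $L$ and bound $\frac1n\sum_i\norm{A_{i,n}-A_n}\le 2L^1_n$, whereas the paper expands $\norm{A_{i,n}-A_n}^2$ via the triangle inequality to get $v_{a_n}\le a_n(L^1_n L^\infty_n + 3(L^1_n)^2)\le 4a_n L^1_n L^\infty_n$---but both routes arrive at the same denominator $6a_n L^1_n L^\infty_n$ after invoking $\eps<3L^1_n$.
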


\begin{proof}
For each $n$, let $\set{Y_{i, n} : 1 \leq i \leq n}$ be random matrices drawn uniformly without replacement from $\set{A_{i, n} : 1 \leq i \leq n}$. Then $(Y_{1,n}, \ldots, Y_{n,n}) = (A_{\sigma(1), n}, \ldots, A_{\sigma(n), n})$, with $\sigma \in S(n)$ drawn uniformly at random. Combining the preceding propositions,
\[
\begin{split}
& \Pr{\exists j : 1 \leq j \leq b_n, \norm{\frac{1}{a_n} \sum_{i \in V_{j,n}} A_{\sigma(i), n} - A_n} > \eps} \\
&\quad = \Pr{\exists j : 1 \leq j \leq b_n, \norm{\frac{1}{a_n} \sum_{i \in V_{j,n}} Y_{i, n} - A_n} > \eps} \\
&\quad \leq \sum_{j=1}^{b_n} \Pr{\norm{\frac{1}{a_n} \sum_{i \in V_{j,n}} Y_{i, n} - A_n} > \eps} \\
&\quad \leq b_n 2 d \exp \left( - \frac{a_n^2 \eps^2/2}{v_{a_n} + 2 L^\infty_n \eps a_n /3} \right),
\end{split}
\]
where we note that $\norm{A_n} \leq L^1_n \leq L^\infty_n$. Finally,
\[
\begin{split}
v_{a_n} & = \frac{a_n}{n} \sum_{i = 1}^n \norm{A_{i,n} - A_n}^2
\leq \frac{a_n}{n} \sum_{i = 1}^n (\norm{A_{i,n}}^2 + 2 \norm{A_{i,n}} \norm{A_n} + \norm{A_n}^2) \\
& \leq a_n (L^1_n L^\infty_n + 3 (L^1_n)^2) \leq 4 a_n L^1_n L^\infty_n
\end{split}
\]
and it remains to substitute the bound on $\eps$.
\end{proof}

\begin{proof}[Proof of Theorem~\ref{Thm:Main}]

For a fixed $\eps > 0$, denote
\[
S_n = \set{\sigma \in S(n) : \text{ conditions } \eqref{Eq:Ineq-1} \text{ and } \eqref{Eq:Ineq-2} \text{ hold for the array} \set{A_{\sigma(i), n}}}.
\]
First suppose that assumption~\eqref{Eq:1} holds. Choose $a_n, b_n$ so that
\[
(L^1_n)^2 e^{L^1_n} = o(b_n), \quad \log(b_n) (L^1_n L^\infty_n e^{2 L^1_n}) \leq (L^1_n)^{2 + \delta} L^\infty_n e^{2 L^1_n} = o(a_n).
\]
Then
\[
b_n \exp \left( - \frac{a_n \eps^2/12}{L^1_n L^\infty_n e^{2 L^1_n}} \right)
\rightarrow 0.
\]
Thus by Lemma~\ref{Lemma:Random} and its scalar version, for fixed $\eps$ and large $n$, $\Pr{S_n}$ is close to $1$. The conclusion follows from the estimate in Proposition~\ref{Prop:uniform}.

Under the assumption \eqref{Eq:2}, choose $a_n, b_n$ so that
\[
(L^1_n)^2 e^{L^1_n} = o(b_n), \quad \log(n) (L^1_n L^\infty_n e^{2 L^1_n}) = o(a_n).
\]
Then
\[
\begin{split}
\sum_n b_n \exp \left( - \frac{a_n \eps^2/12}{L^1_n L^\infty_n e^{2 L^1_n}} \right)
& \leq \sum_n n \exp \left( - \frac{a_n \eps^2/12}{L^1_n L^\infty_n e^{2 L^1_n}} \right) \\
& = \sum_n n^{1 - (\eps^2/12) (a_n/\log(n) L^1_n L^\infty_n e^{2 L^1_n})} < \infty.
\end{split}
\]
The rest of the argument is similar to the one above, with the Borel-Cantelli lemma leading to the almost sure convergence.
\end{proof}

We finish the section with an additional example. See \cite{Berger-CLT-RM} and \cite{Watkins-Oseledec} for related results.





\begin{Example}

Let $A : [0,1] \rightarrow M_d(\mf{C})$ be Riemann integrable. There exists an evolution family $\set{U(s,t) : 0 \leq s \leq t \leq 1}$ which satisfies $U(s, t) \circ U(t,r) = U(s,r)$, $U(t,t) = I$, and
\[
U(s,t) = \int_s^t U(s,r) A(r) \,dr.
\]
If $A$ is continuous,
\[
\partial_t U(s,t) = U(s,t) A(t).
\]
Moreover, this family can be interpreted as a product integral or a time-ordered exponential:
\[
U(s,t) = \prod_s^t e^{A(x)} \,dx = \lim_{n \rightarrow \infty} \prod_{i= [s n] + 1}^{[t n]} e^{A(i/n)/n}.
\]
We now note that we have almost sure convergence
\[
\prod_{i = [s n] + 1}^{[t n]} e^{A_{i,n}/n} \rightarrow e^{(t-s) \int_0^1 A(x) \,dx}
\]
under either of the following assumptions. In either case, both $L^\infty_n \leq \sup_{0 \leq x \leq 1} \norm{A(x)}$, and $L^1_n \rightarrow \int_0^1 \norm{A(x)} \,dx$ are uniformly bounded.
\begin{itemize}
\item
$A_{i,n} = A(\sigma(i)/n)$, where $\sigma \in S(n)$ is drawn uniformly at random. Equivalently, $A_{i,n} = A(T_\sigma(i/n))$, where $T_\sigma$ is the (measure-preserving) bijection of $[0,1]$ permuting subintervals in its partition into $n$ parts of equal length.
\item
$A_{i,n}$ are i.i.d. drawn from the distribution of $A$ (that is, $\Pr{A_{i,n} \in S} = \abs{\set{t : A(t) \in S}}$). Intuitively, $A_{i,n} = A(T(i/n))$, where $T$ is drawn uniformly at random from the set of all measure-preserving bijections of $[0,1]$. Such a uniform distribution exists \cite{Deneckere} although it is not very natural.
\end{itemize}

\end{Example}

\def\cprime{$'$} \def\cprime{$'$}
\providecommand{\bysame}{\leavevmode\hbox to3em{\hrulefill}\thinspace}
\providecommand{\MR}{\relax\ifhmode\unskip\space\fi MR }
\providecommand{\MRhref}[2]{%
  \href{http://www.ams.org/mathscinet-getitem?mr=#1}{#2}
}
\providecommand{\href}[2]{#2}

\end{document}